\documentclass[a4paper,abstracton]{scrartcl}
\usepackage[utf8]{inputenc}
\usepackage{amsfonts,amsthm,amssymb,amsmath}
\usepackage[inline]{enumitem}
\usepackage{color}
\usepackage{graphicx} 
\usepackage{authblk}

\newtheorem{thm}{Theorem}

\newtheorem{conj}[thm]{Conjecture}

\theoremstyle{remark}

\title{A new problem related to Eulerian graphs}
\author{Marcin Stawiski\footnote{ Corresponding author; stawiski@agh.edu.pl}
}

\affil{AGH University of Krakow,\\ Faculty of Applied Mathematics, \protect\\al. Mickiewicza 30, 30-059 Krakow, Poland}

\begin{document}
\maketitle 
\begin{abstract}
Let $G$ be a graph, and $H$ be a finite subgraph of $G$. We say that $H$ is a (semi) $S$-Eulerian subgraph if there exists a closed (open) trail $T$ in $G$ such that each edge of $H$ appears in $T$. We show that the problem of determining whether a subgraph $H$ of a finite graph $G$ is (semi) $S$-Eulerian is NP-Complete. Moreover, we show that both versions of the problem become linear in time if we restrict ourselves to connected subgraphs $H$.
\end{abstract}
\section{Introduction}

Euler's Theorem, which states that a connected finite graph has an Eulerian Circuit if and only if every its vertex has even degree, is one of the earliest results in graph theory. This theorem was stated by Euler \cite{euler} in 1736. Despite the name, it was not proved by Euler, although he showed the necessity of the condition on degrees of the graph. The first proof of Euler's Theorem is attributed to Hierholzer, and it appeared in the paper \cite{hierholzer} published posthumously in 1873.

In this paper, we begin a study of a new problem related to Eulerian graphs. Let $H$ be a subgraph of a graph $G$. We say that an closed (oped) trail $T$ in $G$ is an \emph{$S$-Eulerian Circuit (Trail)} if every edge of $H$ appears in $T$. We say that $H$ is \emph{(semi) $S$-Eulerian} if it has a $S$-Eulerian Circuit (Trail). Equivalently, a subgraph $H$ of $G$ is (semi) $S$-Eulerian if any only if there exists a (semi) Eulerian graph $H'$ such that $H\subseteq H' \subseteq G$.

The main purpose of this paper is to determine the computational complexity of the problem of determining whether a given subgraph $H$ of a given finite graph $G$ is (semi) $S$-Eulerian for some classes of graphs $H$.
 In particular, we prove the following theorem for the class of all finite graphs $H$.

\begin{thm}\label{main:1}
    The problem of determining whether a given subgraph $H$ of a finite graph $G$ is (semi) $S$-Eulerian  is NP-Complete.
\end{thm}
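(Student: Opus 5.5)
Membership in NP is immediate: a certificate is a trail $T$ in $G$, given as a sequence of at most $|E(G)|$ edges. In polynomial time we can check three things: consecutive edges share the appropriate endpoints, no edge is repeated, and every edge of $H$ occurs in $T$. For the closed version we also check that $T$ is closed. Equivalently, the certificate can be an edge set $H'$ with $H\subseteq H'\subseteq G$, and we verify by Euler's Theorem that $H'$ is connected (ignoring isolated vertices) and that every vertex has even degree, or at most two vertices have odd degree in the semi version. The real content is NP-hardness. For that I would reduce from the Hamiltonian cycle problem, which stays NP-complete on cubic graphs, and from Hamiltonian path for the semi version.

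For the closed version, let $F$ be a cubic graph. I build $G$ by replacing each vertex $v$ of $F$ with a gadget and keeping the edges of $F$ as connections between gadgets. Take $H$ to be a disjoint union of small forced pieces, one per gadget, so $H$ is disconnected. The target correspondence is: $H$ is $S$-Eulerian iff $F$ is Hamiltonian. The natural gadget is to subdivide so that each vertex $v$ of $F$ becomes a vertex $v$ with a pendant-like forced structure. For example, take a forced edge $e_v$ in $H$ lying inside the gadget, where the gadget is built so that any trail through $e_v$ must enter and leave the gadget through exactly two of the three $F$-edges at $v$. Concretely, I would replace $v$ by the edge $e_v=v'v''$, join $v'$ to the three $F$-neighbours' gadgets, and attach $v''$ only via a structure that forces the trail covering $e_v$ to pass through $v$ once. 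The simplest version is $G=F$ with each vertex $v$ split into a path $v_1v_2$ (with $e_v=v_1v_2\in H$), where $v_1$ carries one $F$-edge and $v_2$ carries the other two, or some variant of this.

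Then I argue as follows. In an Eulerian $H'\supseteq H$, the parity condition together with degree $3$ in $F$ forces each gadget to be traversed exactly once. Connectivity of $H'$ then forces the chosen $F$-edges to form a single cycle through all gadgets, which is a Hamiltonian cycle. Conversely, a Hamiltonian cycle of $F$ lifts to a closed trail covering every $e_v$. The main obstacle is designing the gadget so that a trail cannot visit a gadget twice: with degree $3$ only one visit is possible at a vertex, and that is exactly why cubic graphs are used. The gadget must also prevent a trail from covering $e_v$ while using an "illegal" pair of $F$-edges. I would first try to keep degrees of all original vertices at most $3$. Then even degree in $H'$ forces degree exactly $2$, so $H'$ is a 2-regular connected spanning subgraph, i.e. a Hamiltonian cycle. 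With this approach no real gadget is needed: take $F$ cubic, subdivide every edge (or not), and let $H$ consist of one edge at each vertex chosen so that every vertex is covered. That is a perfect matching $M$ of $F$, which exists in bridgeless cubic graphs by Petersen's theorem. Then $H'$ connected, even, and of maximum degree $3$ containing $M$ means $H'$ is a Hamiltonian cycle through $M$. So I would reduce from "Hamiltonian cycle containing a given perfect matching in a cubic graph", whose NP-completeness I would need to verify or derive via the standard gadget from 3-SAT.

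For the semi version, I would add a pendant path structure that forces the two odd-degree endpoints. For example, attach two new pendant edges $xa$ and $yb$ to the instance and put them in $H$. The trail must then start at $a$ and end at $b$, and removing those edges brings us back to the closed case with a path between $x$ and $y$. Alternatively, I would reduce the semi version to the closed one by adding an edge $ab$ in $G$ not in $H$.
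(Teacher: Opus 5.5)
Your NP-membership argument is fine, but the hardness part has a genuine gap.

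Your final reduction ($G=F$ cubic, $H=M$ a perfect matching) only shows that $H$ is $S$-Eulerian iff $F$ has a Hamiltonian cycle \emph{containing $M$}. Petersen's theorem gives you some $M$, but a Hamiltonian cycle of $F$ need not contain it. For example, the triangular prism is Hamiltonian, yet no Hamiltonian cycle contains its three rungs. So this is not a reduction from Hamiltonian cycle. You then shift the burden to ``Hamiltonian cycle through a prescribed perfect matching in a cubic graph'' and leave its NP-hardness to be ``verified or derived from 3-SAT''. That is the entire content of the hardness proof, and it is not supplied. Your earlier $v_1v_2$ splitting gadget has the same defect. Since $v_1$ has degree $2$ in $G$ and $e_v\in H'$, the $F$-edge at $v_1$ is forced into $H'$. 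So you again encode a Hamiltonian cycle through a prescribed edge set, not an arbitrary one. The semi version is also not worked out. If $x,y$ are vertices of the cubic graph, attaching pendant edges raises their degree to $4$, so the parity argument no longer forces a single visit there. And, as written, reducing the semi version to the closed one goes the wrong direction for hardness.

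The missing idea is that you do not need forced edges at all. The paper uses the formulation $H\subseteq H'\subseteq G$, which it declares equivalent to the trail definition. There, the vertices of $H$ must lie in $H'$, so the paper takes $H$ to be the edgeless spanning subgraph of $G$. Then $H$ is $S$-Eulerian iff $G$ has a spanning Eulerian subgraph, which is NP-complete by Pulleyblank. Your own degree argument then closes the loop directly. For subcubic $G$, a connected spanning $H'$ with even degrees at most $3$ is $2$-regular, i.e.\ a Hamiltonian cycle. So Hamiltonian cycle in subcubic graphs (Garey--Johnson--Stockmeyer) reduces with no gadget and no matching. Likewise, a subcubic graph has a spanning semi-Eulerian subgraph iff it has a Hamiltonian path, which handles the semi version.

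If you deliberately read the definition as ``the trail covers every \emph{edge} of $H$'', then isolated vertices of $H$ impose nothing and the edgeless-$H$ reduction becomes vacuous. In that reading, edge-forcing is indeed necessary. But then a proven NP-hard source problem, or a working three-terminal gadget, is exactly what your proposal still lacks.
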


On the other hand, if we restrict ourselves to connected subgraphs $H$, then the problem becomes solvable in linear time.

\begin{thm}\label{main:2}
The problem of determining whether a given connected subgraph $H$ of a finite graph $G$ is (semi) $S$-Eulerian is linear in time.
\end{thm}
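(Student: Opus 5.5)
The plan is to reduce the question to a parity condition on the connected components of the graph $G' := G - E(H)$, which is the graph on $V(G)$ with edge set $E(G)\setminus E(H)$. Let $T$ be the set of vertices of odd degree in $H$. By the equivalent formulation in the introduction, $H$ is $S$-Eulerian if and only if there is a set $F\subseteq E(G')$ such that $H\cup F$ is connected and all its degrees are even. The degree condition says exactly that $F$ is a $T$-join in $G'$, i.e.\ the vertices of odd degree in $F$ are precisely those of $T$.

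\textbf{Closed case.} I will use the classical fact that a $T$-join exists in a graph if and only if every connected component contains an even number of vertices of $T$. One direction is the handshake lemma. For the other, pair up the $T$-vertices inside each component, join each pair by a path, and take the symmetric difference of these paths.

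The key step is to show that connectivity of $H\cup F$ costs nothing once $H$ is connected. Take any $T$-join $F$ and delete every component of the subgraph $F$ that contains no vertex of $V(H)$. Such a component contains no vertex of $T\subseteq V(H)$, so all its degrees are even, and deleting it preserves the $T$-join property. Every remaining component of $F$ meets the connected graph $H$, so $H\cup F$ is connected.

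Hence $H$ is $S$-Eulerian if and only if every component of $G'$ contains an even number of odd-degree vertices of $H$. The algorithm is then:
\begin{enumerate*}[label=(\roman*)]
\item compute the degrees in $H$;
\item compute the components of $G'$ by BFS;
\item count the $T$-vertices in each component.
\end{enumerate*}
Each of these takes $O(|V(G)|+|E(G)|)$ time, which gives linearity.

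\textbf{Semi (open-trail) case.} Here we need a connected $H'$ with $H\subseteq H'\subseteq G$ having exactly two odd vertices $u\neq v$. This is equivalent to asking for a $(T\triangle\{u,v\})$-join in $G'$, followed by the same cleanup step. The subtle point, and the step I expect to be the main obstacle, is that the cleanup might discard a component of $F$ containing $u$ and $v$ when both lie outside $V(H)$. I will avoid this by arguing that we may always choose $u,v$ so that each of them is either in $V(H)$ or lies in an $F$-component that also contains a $T$-vertex. Concretely, let $k$ be the number of components of $G'$ with an odd number of $T$-vertices.

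If $k\ge 3$, the answer is no, because $\{u,v\}$ can repair at most two such components.

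If $k=2$, choose $u$ and $v$ to be $T$-vertices in the two odd components. Then $u,v\in V(H)$ and $u\neq v$, so the cleanup argument from the closed case goes through unchanged.

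If $k=0$, choose distinct $u,v\in V(H)$ lying in a common component of $G'$ (for instance, the two endpoints of an edge of $H$ give such a pair only if they are joined in $G'$). If no such pair exists, I will check the remaining degenerate options directly, e.g.\ whether a suitable pair in $V(H)$ can be found using an edge of $H$ together with $G'$. This case analysis needs care, but each check is again a linear scan over the components of $G'$.

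I would verify these edge cases carefully, including what happens when $H$ is a single vertex and how loops and multiple edges are handled, but the core of the proof is the $T$-join parity criterion together with the cleanup lemma.
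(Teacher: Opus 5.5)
Your closed case is correct and is the paper's criterion (each component of $G-E(H)$ contains an even number of odd vertices of $H$), phrased as $T$-joins. Your cleanup step also makes explicit the connectivity of $H\cup F$, which the paper's path-merging argument glosses over. The semi cases $k\ge 3$ and $k=2$ are also fine.

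The gap is the semi case $k=0$. There you only handle pairs $u,v\in V(H)$ and defer everything else as ``degenerate options''. That deferred situation is not degenerate, and the answer there can be yes. Let $G$ be a triangle $abc$ with a pendant edge $ad$, and let $H$ be the triangle. Then $T=\emptyset$ and $k=0$, and the components of $G'$ are $\{a,d\}$, $\{b\}$, $\{c\}$, so no two vertices of $H$ share a component. Yet $d\,a\,b\,c\,a$ is an open trail covering $H$.

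The missing observation is that one end of the trail may lie outside $V(H)$. Suppose $u\in V(H)$ and $v\notin V(H)$ lie in a common component of $G'$. In any $(T\triangle\{u,v\})$-join, the component containing $v$ has a second vertex of $T\triangle\{u,v\}\subseteq V(H)\cup\{v\}$. So that component meets $V(H)$ and survives your cleanup. This gives the criterion for $k=0$: $H$ is semi $S$-Eulerian if and only if some component $C$ of $G'$ with $|C|\ge 2$ meets $V(H)$.

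\emph{Sufficiency:} take $u\in C\cap V(H)$ and $v\in C\setminus\{u\}$, as above.

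\emph{Necessity:} suppose $H\cup F$ has odd vertices exactly $u\neq v$. Then $F$ is a $(T\triangle\{u,v\})$-join, so parity (using $k=0$) puts $u$ and $v$ in one component $C$ of $G'$. Since $H\cup F$ is connected, either $u\in V(H)$, or the initial $F$-segment of a path from $u$ to $H$ ends at a vertex of $C\cap V(H)$.

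For $T\neq\emptyset$ the condition holds automatically. For $T=\emptyset$ it says that some vertex of $H$ is incident with a non-loop edge of $G-E(H)$, which is again a linear check. The paper's one-line semi argument (a matching of its auxiliary graph missing two vertices) has the same blind spot, since that graph is empty when $T=\emptyset$.
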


Our proof of the above theorem is constructive and provides an effective algorithm for finding a $S$-Eulerian Circuit (Trail).

Further research could involve studying other classes of graphs $H$ and $G$ for which the problems are linear, polynomial or NP-Complete. As a starting point, we propose the following conjecture.

\begin{conj}
Let $k$ be a natural number. The problem of determining whether a given subgraph $H$ with at most $k$ components of a finite graph $G$ is $S$-Eulerian (semi $S$-Eulerian) is polynomial in time.
\end{conj}
\section{Main results}

The $S$-Eulerian problem is a generalisation not only of the problem of Eulerian graphs but also of the following problem: Given any graph, we want to check if there exists its spanning Eulerian subgraph. To see that this is indeed a generalisation, it suffices to pick $H$ as the graph on all vertices of $G$ without any edge. Pulleyblank \cite{Pulleyblank} showed that the problem of the existence of a spanning Eulerian subgraph is NP-Complete. It follows that the $S$-Eulerian problem is also NP-Complete. We give a proof of NP-Completeness of the problem of the spanning semi-Eulerian subgraphs, which is very similar to the one of Pulleyblank's Theorem, but we include it for completeness. This together gives us a proof of Theorem \ref{main:1}. Pulleyblank's proof makes use of the following theorem.

\begin{thm}[Garey, Johnson, Stockmeyer 1976 \cite{Garey}]\label{thm:garey}
    The Hamiltonian Cycle problem is NP-Complete for the class of subcubic finite graphs.
\end{thm}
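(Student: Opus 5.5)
Theorem~\ref{thm:garey} is a classical result that we only cite, but here is how I would prove it. Membership in NP is immediate: a Hamiltonian cycle is a certificate checkable in linear time. For hardness I would reduce from 3-SAT, following the structure of Karp's reduction to Hamiltonian cycle but building every gadget so that no vertex has degree more than $3$. I would not start from the general Hamiltonian Cycle problem and replace each high-degree vertex by a gadget. A degree-reducing gadget must be traversable starting at any one of its ``ports'' and leaving at any other, while still covering all of its internal vertices. Simple ladder and chain constructions fail this, because attaching the external edges pushes some internal vertex to degree $4$.

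\textbf{Gadgets.} I would use three kinds.
\begin{enumerate*}[label=(\roman*)]
\item For each variable $x$, a long path traversed from left to right or from right to left; the direction encodes the value of $x$.
\item For each clause $c$, a small gadget that must be visited.
\item An ``exclusive-or'' gadget on two edges $e,f$: a subgraph of maximum degree $3$, attached at the four endpoints of $e$ and $f$, such that every Hamiltonian cycle of the whole graph traverses the gadget exactly as if it used precisely one of $e,f$.
\end{enumerate*}
The XOR gadget is the standard four-vertex-deep ladder. Its internal vertices have degree $3$, and each attachment vertex gains only one extra edge.

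\textbf{Assembly.} I would link the variable paths in a cycle, with a branching vertex of degree $3$ between consecutive variables to allow either direction. For each occurrence of a literal in a clause, I would place an XOR between a dedicated edge of that variable's path and an edge of the clause gadget. To keep degrees bounded, every occurrence gets its own fresh segment of the variable path. The clause gadget is a triangle-like structure with one XOR-ed edge per literal. It is designed so that a Hamiltonian cycle cannot use all three of these edges, which forces at least one literal to be true.

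\textbf{Correctness.} This is the main obstacle. In the forward direction, a satisfying assignment yields a cycle directly: traverse each variable path in its chosen direction and resolve every XOR accordingly. In the backward direction, I would first prove the local lemma for the XOR gadget: any Hamiltonian cycle enters and leaves it through one of only two patterns. This is a finite case analysis over the ways a cycle can cross the gadget's $4$ boundary edges. From the lemma, the cycle's direction on each variable path is consistent, so it defines a truth assignment, and the clause constraint shows that this assignment satisfies every clause. Throughout, I would also check that the degree bound $3$ is preserved at all attachment points, since this is exactly where naive constructions break.
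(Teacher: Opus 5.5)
The paper does not prove this statement; it only cites Garey, Johnson and Stockmeyer. Your sketch therefore has to stand on its own, and as designed it does not.

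The decisive problem is the variable gadget. In an undirected graph, traversing a path left-to-right or right-to-left uses exactly the same edges. The internal vertices of the variable path have degree $2$, so every Hamiltonian cycle uses all of its edges, whichever way it runs. Once an edge $e$ of the path is replaced by an XOR gadget, every cycle resolves that gadget on the $e$-side. The direction is recorded only in which connecting edges at the two ends of the path are used, and an XOR on an internal edge of the path cannot see this. Hence every XOR-ed clause edge is unused in every Hamiltonian cycle, regardless of the intended truth value. Your step ``the direction on each variable path defines an assignment satisfying every clause'' has nothing behind it, since the XOR lemma only controls which edges are used.

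Direction encodings work for digraphs, as in the standard 3-SAT reduction to directed Hamiltonian path. In the undirected setting you need a variable gadget whose two admissible traversals use different edge sets. For example, use a `true' route and a `false' route between two connector vertices, and XOR each literal occurrence to a segment of the corresponding route. You must also ensure that the vertices on the route not taken are still covered; a degree-$2$ vertex between two consecutive XOR gadgets on that route would be stranded. Alternatively, encode by direction in a digraph with in- and out-degrees at most $2$, then replace each vertex $v$ by a path $v_{\mathrm{in}}v_{\mathrm{mid}}v_{\mathrm{out}}$. This yields a subcubic graph.

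The gadget claims also fail as stated. Take a plain $2\times 4$ ladder with rails $a_1a_2a_3a_4$ and $b_1b_2b_3b_4$, rungs $a_ib_i$, and each corner joined to one of $u,u',v,v'$, so all internal vertices have degree $3$. This is not an XOR gadget. With $e=uu'$ attached at $a_1,a_4$ and $f=vv'$ at $b_1,b_4$, a cycle may run $u\,a_1a_2a_3a_4\,u'$ and $v\,b_1b_2b_3b_4\,v'$, i.e.\ behave as if both $e$ and $f$ were used. Every other placement of the terminals admits a comparable bad pattern, so your case analysis would find more than two patterns. The standard gadget subdivides each rung by a vertex of degree $2$. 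This forces all four rungs into the cycle and leaves exactly the two zigzag traversals.

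Finally, suppose the clause gadget is a triangle with XOR-ed sides. The degree bound leaves each corner a single outside edge, so each corner needs at least one used side. Hence at least two sides are used, and three would close a short cycle, so exactly two are. That enforces ``exactly one literal true'', not ``at least one''. The forward direction then fails for any assignment that makes two or three literals of some clause true. The textbook construction gets this slack from triangle corners of large degree, which you cannot afford. You would have to reduce from 1-in-3-SAT or build a different clause gadget. You also still need to explain how the cycle is routed through all clause gadgets while keeping maximum degree $3$.
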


We show the following analogue of this problem.

\begin{thm}
    The Hamiltonian Path problem is NP-Complete for the class of subcubic finite graphs.
\end{thm}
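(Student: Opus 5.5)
Membership in NP is immediate: a Hamiltonian path is a polynomial-size certificate that can be checked in linear time. For hardness, we reduce from the Hamiltonian Cycle problem on subcubic graphs, which is NP-Complete by Theorem~\ref{thm:garey}. The standard reduction from Hamiltonian Cycle to Hamiltonian Path picks a vertex $v$, duplicates it, and attaches pendant vertices to the copies. Duplicating $v$ raises the degrees of its neighbours, which could break subcubicity, so we instead split an edge.

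\textbf{Construction.} Let $G$ be a subcubic graph, and assume $G$ has at least three vertices; smaller instances are decided directly. If $G$ has a vertex of degree at most $1$, it has no Hamiltonian cycle, and we output a fixed subcubic graph with no Hamiltonian path, e.g.\ $K_{1,3}$. Otherwise $G$ has minimum degree at least $2$. If $G$ has a vertex $v$ of degree exactly $2$, with neighbours $u,w$, then every Hamiltonian cycle must use both edges $uv$ and $vw$. In this case we form $G'$ by deleting $v$ and adding new vertices $a,b$ with edges $ua$ and $wb$. Vertices $u$ and $w$ lose one edge and gain one, so their degrees are unchanged, and $a,b$ have degree $1$. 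Hence $G'$ is subcubic.

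If $G$ is cubic, we pick a vertex $v$ with neighbours $x,y,z$. Every Hamiltonian cycle uses exactly two of the three edges at $v$. We therefore build three graphs $G_{xy}, G_{xz}, G_{yz}$; for instance, $G_{xy}$ deletes the edge $vz$, so $v$ has degree $2$, and then applies the degree-$2$ construction above at $v$. Then $G$ is Hamiltonian if and only if one of these three subcubic graphs has a Hamiltonian path. This is a polynomial-time truth-table reduction with three queries. If a many-one reduction is wanted, we join the three gadget copies into one subcubic graph. A cleaner alternative is to reduce from Hamiltonian Cycle on subcubic graphs that contain a degree-$2$ vertex: subdividing any edge of a subcubic graph preserves both Hamiltonicity and subcubicity and creates such a vertex. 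So we first subdivide an edge and then apply the degree-$2$ construction.

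\textbf{Correctness.} Here $v$ has degree $2$ with neighbours $u,w$, and $G'$ is obtained by deleting $v$ and attaching pendant vertices $a$ to $u$ and $b$ to $w$.
\begin{itemize}
\item[($\Rightarrow$)] A Hamiltonian cycle in $G$ has the form $u\,v\,w\,P$, where $P$ is a Hamiltonian path from $w$ to $u$ in $G-v$. Then $a\,u\,P^{-1}\,w\,b$ is a Hamiltonian path in $G'$.
\item[($\Leftarrow$)] In $G'$ the vertices $a$ and $b$ have degree $1$. A Hamiltonian path must therefore have $a$ and $b$ as its endpoints, so it has the form $a\,u\cdots w\,b$. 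Replacing $a$ and $b$ by the single vertex $v$ closes this path into a Hamiltonian cycle of $G$.
\end{itemize}
We should check the degenerate case $u=w$, which cannot occur in a simple graph. Since $|V(G)|\ge 3$, the path $u\cdots w$ in $G-v$ is genuine.

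The main obstacle is preserving subcubicity, since the naive vertex-duplication reduction fails on it. The subdivision trick (for the alternative route) and the degree-$2$ construction handle this, and the remaining steps are routine.
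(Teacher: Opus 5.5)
Your degree-$\le 2$ cases are correct. The cubic case, however, is never covered by a valid many-one reduction, and it cannot be skipped, since Theorem~\ref{thm:garey} does not promise hard instances with a vertex of degree $2$.

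The ``cleaner alternative'' rests on a false claim. Subdividing an edge $xy$ does \emph{not} preserve Hamiltonicity: the subdivided graph is Hamiltonian iff $G$ has a Hamiltonian cycle \emph{through} $xy$. For example, $K_4$ minus an edge is Hamiltonian, but subdividing the edge joining its two degree-$3$ vertices gives $K_{2,3}$, which is not. Your composite construction is just ``delete $xy$ and attach leaves at $x$ and $y$''. That tests for a Hamiltonian cycle through the chosen edge, which is exactly why the paper loops over all edges.

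The other upgrade, ``join the three gadget copies into one subcubic graph'', is not a construction. The graphs $G_{xy},G_{xz},G_{yz}$ have six leaves between them. Moreover, a Hamiltonian path of any connected combination must traverse all three copies, so combining them behaves like an AND, not an OR.

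What survives is a three-query disjunctive truth-table reduction. That is the same kind of argument as the paper's, which ORs over all edges $uv$ and asks for a Hamiltonian $u$--$v$ path in $G-uv$. Your leaves even supply the step the paper gets backwards, namely reducing the fixed-endpoint problem \emph{to} unrestricted Hamiltonian Path. Still, under the standard many-one definition this does not give NP-completeness.

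The fix is to replace $v$ by a single gadget instead of combining three copies of $G$. For cubic $G$ and $v$ with neighbours $x,y,z$, delete $v$. Add new vertices $p,q,a,b,c,d,x',y',z'$ with edges $pa,qb,ax',ac,by',bc,cz',dx',dy',dz'$ and $xx',yy',zz'$. The result is subcubic, and $p$ and $q$ are leaves.

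Any Hamiltonian path must therefore run from $p$ to $q$. It crosses the cut $\{xx',yy',zz'\}$ an even, positive number of times, hence exactly twice. So it restricts to a Hamiltonian path of $G-v$ between two of $x,y,z$, which closes through $v$ to a Hamiltonian cycle of $G$.

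Conversely, for each pair of ports the gadget splits into a path from $p$ and a path from $q$ ending at that pair:
\begin{itemize}
\item $p\,a\,c\,z'\,d\,x'$ and $q\,b\,y'$ for $\{x',y'\}$;
\item $p\,a\,c\,z'$ and $q\,b\,y'\,d\,x'$ for $\{x',z'\}$;
\item $p\,a\,x'\,d\,y'$ and $q\,b\,c\,z'$ for $\{y',z'\}$.
\end{itemize}
Together with your other cases, this gives a genuine Karp reduction.
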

\begin{proof}
    First, consider the problem of finding a Hamiltonian Path in a subcubic graph between given two vertices. We can transform Hamiltonian Cycle problem for subcubic graph into this problem in the following way. First, we delete one edge $vu$ of the given graph, and we check whether there exists a Hamiltonian Path in this graph with ends $v$ and $u$. We repeat this for every edge of the graph. It follows that the problem of the Hamiltonian Path between two given vertices in the class of subcubic finite graphs is NP-Complete by Theorem \ref{thm:garey}.

    Now, the Hamiltonian Path problem is equivalent to checking the existence of a Hamiltonian Path between two given vertices for all possible pairs of vertices. Hence, the Hamiltonian Path problem in the class of subcubic finite graphs is NP-Complete.
\end{proof}

Now we proof the analogue of Pulleyblank's Theorem for semi-Eulerian subgraphs.

\begin{thm}
    The problem of the existence of a spanning semi-Eulerian subgraph in the class of subcubic finite graphs is NP-Complete.
\end{thm}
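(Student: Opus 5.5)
Membership in NP is immediate: a certificate is a spanning subgraph $H'$ of $G$, and we check in polynomial time that $H'$ is connected and has at most two vertices of odd degree. For hardness I would reduce from the Hamiltonian Path problem on subcubic graphs, which is NP-Complete by the previous theorem. The key claim is that a subcubic graph $G$ has a spanning semi-Eulerian subgraph if and only if it has a Hamiltonian path. Then the reduction is the identity map on instances, exactly as in Pulleyblank's argument for the closed case.

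One direction is trivial. A Hamiltonian path $P$ in $G$ is itself a connected spanning subgraph with exactly two odd-degree vertices (or with none if $|V(G)|=1$), so $P$ is a spanning semi-Eulerian subgraph.

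For the converse, let $H'\subseteq G$ be a spanning semi-Eulerian subgraph, and let $T$ be an open (or closed) trail using exactly the edges of $H'$. Since $T$ traverses all edges of $H'$ and $H'$ is spanning, $T$ visits every vertex. Because $G$ is subcubic, $\deg_{H'}(v)\le 3$ for every $v$. A vertex that is an internal point of the trail visited twice has degree at least $4$ in $H'$, so this is impossible. The only other way to visit a vertex twice is for it to be an endpoint of $T$ that is also passed through internally, giving degree $3$ (one end-edge plus two passing edges). I therefore expect $T$ to be a path, except possibly at its endpoints.

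This endpoint analysis is the main obstacle, and I would handle it directly. Suppose the trail starts at $u$, leaves, and later returns through $u$ once more. Then $\deg_{H'}(u)=3$, and the trail is $u\,x\ldots y\,u\,z\ldots$. In this case I drop the first edge $ux$ and start the walk at $x$: the rest of $T$ is still a trail covering all vertices, and $u$ is now visited only once. I do the same at the other end. If the trail is closed, every vertex has even degree at most $3$, hence degree $2$. Then $H'$ is a Hamiltonian cycle, and deleting one edge gives a Hamiltonian path.

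After these modifications each vertex appears exactly once in the trail, so we obtain a Hamiltonian path. I would check that trimming at one end cannot create a repetition at the other end, and that the degenerate cases ($T$ with no edges, or $G$ on at most two vertices) are consistent with the claim. This completes the equivalence, and hence NP-Completeness in the subcubic class.
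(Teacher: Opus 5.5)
Your proposal is correct and follows the paper's route: the identity reduction from Hamiltonian Path in subcubic graphs. It is more careful than the paper, whose one-line claim that a Hamiltonian path \emph{is the same as} a spanning Eulerian trail is false as stated. For example, the trail $z\,u\,x\,y\,u$ in a triangle $uxy$ with pendant edge $uz$ is a spanning Eulerian trail but not a Hamiltonian path. Your endpoint-trimming argument, together with the NP-membership check, is what turns that claim into a correct equivalence of existence.

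One case your promised check must handle: if the second vertex $x$ of $T$ is the other endpoint $w$, then dropping $ux$ closes the trail rather than leaving a repetition to trim at that end. The result is then a Hamiltonian cycle in $H'-ux$, so your closed-trail case finishes the argument.
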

\begin{proof}
    It is enough to notice that a Hamiltonian Path in a subcubic graph is the same as a spanning Eurelian Trail.
\end{proof}

\qed

Now, we prove that the (semi) $S$-Eulerian problem for connected graphs $H$ can be solved in linear time.

\noindent\textit{Proof of Theorem \ref{main:2}.}
Let $S$ be the set of vertices of $H$ of odd degree in $H$. We construct the graph $F$ in the following way. We define $V(F)=S$, and join two distinct vertices of $F$ by an edge if and only if there exists a path that joins them in $G-E(H)$. It is important that we can find $F$ in linear time, as it is enough to find the components of $G$ and count the vertices of each component.   Checking whether there exists a perfect matching in $F$ is trivial, because each component of $F$ is a clique. A perfect matching in $F$ exists if and only if every component of $F$ has an even order. Therefore, we can check it in linear time. We claim that $H$ is $S$-Eulerian if and only if there exists a perfect matching in $F$.

 Assume first that no such matching exists. If we add a path in $E(G)-E(H)$ between the vertices in $H$, then it either decreases the number of odd-degree vertices in a new graph by two, increases this number by two, or leaves it the same. Therefore, each component of $F$ must have even order, as otherwise we cannot lower the number of vertices of odd degree to $0$. This means that we cannot extend $H$ to an Eulerian graph $H'\subseteq G$.
 
 Now, assume that there exists a perfect matching $M$ in $F$. We show how to obtain an Eulerian graph $H'\subseteq G$ containing $H$ using $M$. Let $(e_i \colon i\leq k )$, for some natural number $k$ be an enumeration of the edges of $M$. Starting from $i=0$, we join two endpoints of $e_i$ by a path in $G-E(H)$. If $i=0$, then we do not perform additional actions and proceed with the next index. Assume then that $i\geq 1$, and that after step $i-1$ we have chosen $i$ pairs of different vertices of the set $\bigcup\{e_j\colon j\leq i-1\}$ joined by edge-disjoint paths in $G-E(H)$. We define graph $H_i$ as the graph obtained from $H$ by adding these paths. In step $i$, we join two endpoints of $e_i$ with a path in $G-E(H)$ to obtain the graph $H'_{i+1}$ from the graph $H_{i}$. If this path is edge-disjoint with all the paths chosen in the previous step, then we proceed with a new index. If not, then the new path has common edges with some of the chosen paths. We can assume that if there are two common edges of two paths, then all the edges in between are also their common edges. Now, it is enough to remove the common edges of these paths from the graph $H_i'$. After this operation, we obtain $i+1$ pairwise edge-disjoint paths joining pairs of different vertices from $\bigcup\{e_j\colon j\leq i\}$.

 A proof for the semi $S$-Eulerian problem is basically the same but instead of a perfect matching in $F$ we need a matching between all but 2 vertices of $F$.
 \qed

\section*{Acknowledgement}

The author expresses his gratitude to Florian Lehner for providing the reference of Pulleyblank's paper.

\bibliographystyle{abbrv}
\bibliography{lit.bib}

\end{document}